\DeclarePairedDelimiter\floor{\lfloor}{\rfloor}
\newcommand{\Z}{\mathbb{Z}}
\newcommand{\B}{B}
\DeclareMathOperator{\Sp}{Sp}
\DeclareMathOperator{\Mod}{Mod}
\newcommand{\p}[1]{\bigskip \noindent \emph{#1}.}
\theoremstyle{plain}
\newtheorem{theorem}{Theorem}[section]
\newtheorem{proposition}[theorem]{Proposition}
\newtheorem{lemma}[theorem]{Lemma}
\newtheorem{question}{Question}
\newcommand{\nc}{\newcommand}
\nc{\dmo}{\DeclareMathOperator}
\nc{\para}[1]{\medskip\noindent\textbf{#1.}}
\title{Finite quotients of braid groups}
\author{Alice Chudnovsky}
\author{Kevin Kordek}
\author{Qiao Li}
\author{Caleb Partin}
\address{Alice Chudnovsky \\ Department of Mathematics\\ University of Illinois at Urbana-Champaign \\ 1409 W. Green Street (MC-382) \\ Urbana, IL 61801}
\email{achudnovsky98@gmail.com}
\address{Kevin Kordek \\ School of Mathematics\\ Georgia Institute of Technology \\ 686 Cherry St. \\ Atlanta, GA 30332}
\email{kevin.kordek@math.gatech.edu}
\address{Qiao Li \\ Department of Mathematics\\ University of California, Berkeley \\ 970 Evans Hall $\#$ 3840 \\ Berkeley, CA 94720-3840 }
\email{lilyli@berkeley.edu}
\address{Caleb Partin \\ School of Mathematics\\ Georgia Institute of Technology \\ 686 Cherry St. \\ Atlanta, GA 30332}
\email{ctpartin@gatech.edu}
\begin{document}
\maketitle

\begin{abstract}
We derive a lower bound on the size of finite non-cyclic quotients of the braid group that is superexponential in the number of strands. We also derive a similar lower bound for nontrivial finite quotients of the commutator subgroup of the braid group.
\end{abstract}
\section{Introduction}
Let $\B_n$ denote the braid group on $n$ strands and let $\B_n'$ denote its commutator subgroup. It is a basic problem to describe all homomorphisms from $\B_n$ or $\B_n'$ to a given group $G$. The first results are due to Artin~\cite{artin}, who described all transitive homomorphisms from $\B_n$ to the symmetric group $S_n$. Lin~\cite{lin} extended Artin's results in various ways and proved analogous results for $\B_n'$. 

Since the abelianization of $\B_n$ is infinite cyclic, it is always possible to construct homomorphisms $\B_n\rightarrow G$ that factor through $\Z$. Such a homomorphism is said to be cyclic. On the other hand, the abelianization of $\B_n'$ trivial for $n\geq 5$ (see~\cite{lin}) and so no such construction is possible for $\B_n'$. In general, for fixed $n$ and $G$ it is often not clear whether there exist non-cyclic homomorphisms $\B_n\rightarrow G$ or non-trivial homomorphisms $\B_n'\rightarrow G$. Our main results are a necessary condition for the existence of non-cyclic homomorphisms $\B_n\rightarrow G$ and a necessary condition for the existence of non-trivial homomorphisms $\B_n'\rightarrow G$. 

\begin{theorem}\label{thm:main1}
Let $G$ be a finite group and let $n\geq 5$.  If $\B_n\rightarrow G$ is not a cyclic homomorphism then 
\[
|G| \geq 2^{\floor{n/2}-1}\left(\floor{n/2}\right)!
\]
\end{theorem}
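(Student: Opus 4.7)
The argument breaks naturally into two parts. First, because $\B_n/\B_n'\cong\Z$, a homomorphism $\phi:\B_n\to G$ is cyclic if and only if it factors through the abelianization, equivalently if and only if $\phi(\B_n')=1$. So for non-cyclic $\phi$ the subgroup $\phi(\B_n')$ is nontrivial and $|G|\geq|\phi(\B_n')|$; it therefore suffices to bound $|\phi(\B_n')|$ from below. This is exactly the content of the companion theorem on $\B_n'$ announced in the abstract, and Theorem~\ref{thm:main1} will follow from it.

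To bound $|\phi(\B_n')|$, the plan is to run a totally-symmetric-set style counting argument on a rich family of pairwise commuting elements inside $\B_n'$. Setting $k:=\floor{n/2}$, one starts from the pairwise commuting half-twists $\sigma_1,\sigma_3,\dots,\sigma_{2k-1}$, whose supports (the strand pairs $\{2i-1,2i\}$) are disjoint. These generators themselves lie in $\B_n\setminus\B_n'$, but zero-writhe combinations such as the differences $y_i := \sigma_{2i+1}\sigma_{2i-1}^{-1}$ for $i=1,\dots,k-1$ lie in $\B_n'$ and remain pairwise commuting. For each transposition of adjacent pair-intervals there is an explicit braid realizing it by conjugation, giving an $S_k$-action on $\{\sigma_1,\sigma_3,\dots,\sigma_{2k-1}\}$ and hence an induced action on the $y_i$; together with the half-twists within each pair-interval that invert the individual $\sigma_{2i-1}$, one obtains the full hyperoctahedral conjugation symmetry $(\Z/2)^k\rtimes S_k$ of order $2^k k!$.

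The key preliminary lemma is a "collapse implies cyclicity" statement: if $\phi(\sigma_i)=\phi(\sigma_{i+1})$ for some $i$, then $\phi$ is cyclic. This is a short calculation — setting $g=\phi(\sigma_i)=\phi(\sigma_{i+1})$ and $h=\phi(\sigma_{i+2})$, the braid relation $ghg=hgh$ together with the far-commutativity $gh=hg$ forces $g=h$, and iterating shows all $\phi(\sigma_j)$ coincide, so $\phi$ factors through $\B_n^{ab}\cong\Z$. The heart of the argument is then to show that any nontrivial coincidence among the hyperoctahedral images of the symmetric family can be transported by $\B_n$-conjugation into an adjacent-generator equality $\phi(\sigma_i)=\phi(\sigma_{i+1})$, and thus forces $\phi$ to be cyclic. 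Consequently, for non-cyclic $\phi$ the hyperoctahedral orbit of size $2^k k!$ survives inside $G$ modulo at most one global relation, which accounts for the factor $2^{k-1}$ rather than $2^k$ and yields the bound.

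The main obstacle is precisely this propagation step: controlling which coincidences in the images of the symmetric family can be conjugated back to an adjacent-generator equality, and organizing the count so as to lose only a single factor of two. This is a refinement of the totally-symmetric-set technology developed in recent work on braid and mapping class group quotients, adapted here to a signed (hyperoctahedral) symmetry; the bulk of the labour is the combinatorial bookkeeping needed to push that framework through for the signed action.
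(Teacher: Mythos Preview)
There are two concrete problems. First, the reduction in your opening paragraph gives the wrong bound: applying the companion result for $\B_n'$ to the restriction $\phi|_{\B_n'}$ yields only $|G|\geq |\phi(\B_n')| \geq 2^{\lfloor n/2\rfloor-2}(\lfloor n/2\rfloor-1)!$, which is smaller than the target by a factor of $2\lfloor n/2\rfloor$. Theorem~\ref{thm:main1} does not follow from Theorem~\ref{thm:main2}. Second, the hyperoctahedral conjugation symmetry you invoke does not exist: $\sigma_{2i-1}$ and $\sigma_{2i-1}^{-1}$ are not conjugate in $\B_n$, since the abelianization $\B_n\to\Z$ sends them to $+1$ and $-1$ respectively. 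So there is no $(\Z/2)^k$ worth of sign-flips available by conjugation, and your orbit-counting scheme loses its $2^{k}$ factor entirely. The $2^{k-1}$ in the actual bound has a different source: it is a lower bound for the order of the abelian subgroup $\langle f(X)\rangle$ generated by the images of the commuting half-twists, not the size of a conjugation orbit.

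The paper's argument avoids both issues by working directly with the totally symmetric set $X=\{\sigma_1,\sigma_3,\ldots,\sigma_{2k-1}\}\subset\B_n$, with no passage to $\B_n'$. One first shows $|f(X)|=k$: otherwise the collapse lemma forces $|f(X)|=1$, hence $\sigma_1\sigma_3^{-1}\in\ker f$, and since the normal closure of $\sigma_1\sigma_3^{-1}$ in $\B_n$ equals $\B_n'$ this makes $f$ cyclic. Then a general proposition finishes: any group containing a totally symmetric set $S$ of size $k$ whose elements have finite order satisfies $|G|\geq 2^{k-1}k!$, because the setwise stabilizer of $S$ surjects onto $S_k$ and the kernel of that surjection contains $\langle S\rangle$, which one shows has order at least $2^{k-1}$. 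Your adjacent-generator collapse lemma is correct but unnecessary here; the relevant collapse is $f(\sigma_1)=f(\sigma_3)$, handled cleanly by the normal-closure fact.
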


\begin{theorem}\label{thm:main2}
Let $G$ be a finite group and let $n\geq 5$. If $\B_n'\rightarrow G$ is not the trivial homomorphism then 
\[
|G| \geq 2^{\floor{n/2}-2}\left(\floor{n/2}-1\right)!
\]
\end{theorem}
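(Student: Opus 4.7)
I would parallel the (presumably TSS-based) proof of Theorem~\ref{thm:main1}, but inside $B_n'$ rather than $B_n$. Put $y_i = \sigma_{2i-1}\sigma_1^{-1}$ for $i = 2,\ldots,k$ (where $k = \floor{n/2}$) and $Y = \{y_2,\ldots,y_k\}$. Each $y_i$ has exponent sum zero and so lies in $B_n'$; the $y_i$ commute because every pair of odd-indexed Artin generators does; and they freely span $\<Y\>\cong\Z^{k-1}$. Given a permutation $\pi$ of $\{2,\ldots,k\}$, pick $\beta'\in B_{n-2}$ acting on strands $3,\ldots,n$ with $\beta'\sigma_{2i-1}(\beta')^{-1}=\sigma_{2\pi(i)-1}$; the element $\beta = \sigma_1^{-s}\beta'$, where $s$ is the exponent sum of $\beta'$, lies in $B_n'$, commutes with $\sigma_1$, and conjugates $y_i$ to $y_{\pi(i)}$. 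So every permutation of $Y$ is realized by conjugation in $B_n'$.

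If $\phi|_Y$ is injective, the same counting that drives Theorem~\ref{thm:main1} applies: the $S_{k-1}$-action on $\phi(Y)\subset G$ by conjugation yields $(k-1)!$ as the index of the centralizer $C_G(\phi(Y))$ in its normalizer, while the abelian subgroup $A = \<\phi(Y)\>\subseteq C_G(\phi(Y))$ is a finite $S_{k-1}$-module quotient of the integral permutation lattice $\Z^{k-1}$ in which the $k-1$ basis images remain distinct and nonzero. Classifying the $S_{k-1}$-invariant sublattices of $\Z^{k-1}$ (reducing mod $2$ and noting that, for $k\ge 4$, only the zero and diagonal sublattices preserve basis injectivity) forces $|A|\ge 2^{k-2}$, so $|G|\ge 2^{k-2}(k-1)!$, as required.

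The main obstacle is the case where $\phi|_Y$ is not injective; by the standard dichotomy this forces $\phi(Y)=\{c\}$ for a single $c\in G$. Compare with the analogous totally symmetric set $Y' = \{\sigma_{2i-1}\sigma_3^{-1}: i\ne 2\}$ based at $\sigma_3$: its elements include $\sigma_1\sigma_3^{-1}=y_2^{-1}$ (mapped to $c^{-1}$) and $\sigma_5\sigma_3^{-1}=y_3y_2^{-1}$ (mapped to $1$), so $\phi|_{Y'}$ is non-constant unless $c=1$, in which case the injective case for $Y'$ delivers the bound. This leaves only the subcase $\phi|_Y=1$, where I would propagate triviality through $B_n$-conjugates of $Y$ (each of which is again a totally symmetric set in $B_n'$) to obtain $\phi(\sigma_a\sigma_b^{-1})=1$ for all $|a-b|\ge 2$, and then use the braid relation together with the availability of an intermediate generator (which is what requires $n\ge 5$) to force triviality on the remaining adjacent pairs, contradicting nontriviality of $\phi$. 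Closing this last step --- upgrading triviality on well-separated elementary ratios to triviality on all of $B_n'$ --- is where I expect the technical heart of the argument to lie.
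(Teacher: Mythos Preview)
Your approach is essentially the paper's: use the totally symmetric set $X' = \{\sigma_1\sigma_{2i-1}^{-1}\}$ (your $Y$ is its elementwise inverse) and Proposition~\ref{prop:size} in the injective case, then argue triviality in the collapsed case. Your sketch of the $2^{k-2}$ bound via $S_{k-1}$-invariant sublattices is an alternative to the paper's Lemma~\ref{lem:torsion}, which instead exhibits an explicit surjection $\langle S\rangle \to (\Z/p)^n/\langle e_1+\cdots+e_n\rangle$.

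Your handling of the non-injective case is more tangled than necessary and contains a slip. In your $c\ne 1$ branch you observe $\phi(\sigma_5\sigma_3^{-1})=1$ and $\phi(\sigma_1\sigma_3^{-1})=c^{-1}\ne 1$, then invoke the injective case for $Y'$. But $\phi(Y')$ would then be a totally symmetric set of size at least $2$ containing the identity, which is impossible since all elements of a totally symmetric set are conjugate. So the $c\ne 1$ branch is vacuous, and there is no need for $Y'$ at all: your own computation $\phi(y_3 y_2^{-1}) = c\cdot c^{-1} = 1$ already puts $\sigma_3\sigma_5^{-1}$ in $\ker\phi$ regardless of the value of $c$. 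The paper proceeds directly from this: once $\sigma_3\sigma_5^{-1}\in\ker\phi$, it cites Lin for the fact that the normal closure of $\sigma_1\sigma_3^{-1}$ (hence of its conjugate $\sigma_3\sigma_5^{-1}$) in $B_n$ equals $B_n'$, and then \cite[Lemma~8.3]{chenkordekmargalit1} to conclude that the normal closure in $B_n'$ is also all of $B_n'$, so $\phi$ is trivial. This is exactly the ``upgrading'' step you flag as the technical heart; it is a citable fact rather than something to rederive via propagation through conjugates and braid relations. (The paper treats $n=5$ separately via perfectness of $B_5'$, but since the stated bound there is $1$, your argument needs no special case.)
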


Perhaps the best known finite, non-cyclic  (respectively nontrivial) quotient of $\B_n$ (respectively $\B_n'$) is the symmetric group $S_n$ (respectively the alternating group $A_n$). It does not appear to be known whether there are any such quotients of $\B_n$ (respectively $\B_n'$) of smaller cardinality for $n\geq 5$ (respectively $n\geq 6$), although larger finite non-cyclic quotients do exist. We remark here that $\B_n$, and hence $\B_n'$, is residually finite and so they both possess plenty of finite quotients.

\medskip

After learning about our Theorem~\ref{thm:main1}, Margalit asked the following.

\begin{question}\label{q1}

For $n\geq 5$, is $S_n$ the smallest finite, non-cyclic quotient of $\B_n$? 

\end{question}

We also have the following related question.

\begin{question}\label{q2}

For $n\geq 6$, is $A_n$ the smallest finite, non-trivial quotient of $\B_n'$?
\end{question}

The first question above has a positive answer for $n=2$ and $n=3$ as can be checked by examining the list of groups of order at most 6. The second question has a positive answer for $n=5$ because any quotient of $\B_5'$ is perfect (see~\cite[p.7]{lin}) and $A_5$ is the smallest non-trivial perfect group. The answer to the first question is negative for $n=4$ because there is an exceptional surjective homomorphism $S_4\rightarrow S_3$. The second question has a negative answer for $n\in \{3,4\}$ also for exceptional reasons: the group $\B_3'$ is a free group and so surjects onto $\Z/2$, while $\B_4'$ surjects onto the commutator subgroup of $S_4$ which in turn surjects onto the (cyclic) commutator subgroup of $S_3$. 

The analogue of the above questions for mapping class groups asks for the minimal non-trivial quotient of the genus $g$ mapping class group $\Mod(S_g)$.  Zimmerman~\cite{zimmermann} proved that the smallest non-trivial quotient of $\Mod(S_g)$ is the symplectic group $\Sp_g(\Z/2)$ provided $g\in \{3,4\}$, and conjectured that the result held for all $g\geq 3$. Zimmermann's conjecture was later proven by Kielak--Pierro~\cite{kielakpierro}. In a slightly different direction, Berrick--Gebhardt--Paris~\cite{berrickgebhardtparis} proved that the minimal index of a proper subgroup of $\Mod(S_g)$ is equal to $2^{g-1}(2^g-1)$ and that, up to conjugation, there is exactly one subgroup with this index. We point out, though, that the subgroups of this index are not normal. 

\p{Obstructions} Various conditions are  known to obstruct the existence of non-cyclic homomorphisms $\B_n\rightarrow G$ or non-trivial homomorphisms $\B'_n\rightarrow G$. For example, it follows from the fact that $\B_n'$ is perfect for $n\geq 5$ (see~\cite[p.7]{lin}) that any homomorphism from $\B_n$ or $\B_n'$ to a solvable group is cyclic or trivial, respectively, for $n\geq 5$. More generally, $\B_n'$ does not admit non-trivial homomorphisms to residually solvable groups for $n\geq 5$. 

Another obstruction arises from sizes of generating sets. The braid group $\B_n$ admits a generating set of size 2 (consisting of a rotation and a half-twist), and hence any non-trivial quotient of $\B_n$ (in particular any non-cyclic quotient of $\B_n$) can also be generated by two elements. Likewise, Gorin--Lin~\cite[p.6]{lin} proved that $\B_3'$ has a generating set of size 2, and the second author proved~\cite{kordeksmallgen} that $\B_n'$ is generated by two elements for $n\in \{5\}\cup [7,\infty)$ and by three elements for $n\in \{4,6\}$. It follows that any quotient of $\B_n'$ is generated by two elements for $n\in \{5\}\cup [7,\infty)$ or by three element for $n\in \{4,6\}$. 

\p{Prior results} Since any finite group $G$ embeds into a sufficiently large symmetric group $S_k$, it is sometimes possible to understand all homomorphisms $\B_n, \B_n'\rightarrow G$ by classifying all homomorphisms $\B_n, \B_n' \rightarrow S_k$.  The first result of the latter type is due to Artin~\cite{artin}, who proved that all homomorphisms $\B_n\rightarrow S_n$ with transitive image and with $n\neq 4,6$ are either cyclic or conjugate to the standard projection. Artin also completely described all of the exceptional homomorphisms that arise for $n\in \{4,6\}$. 

These results were later greatly extended by Lin~\cite{lin}. Among other results, he proved that any homomorphism $\B_n\rightarrow S_k$ with $k<n$ and $n\geq 5$ is cyclic, that any transitive homomorphism $\B_n\rightarrow S_m$ with $6 <n<m <2n$ is cyclic, that all transitive homomorphisms $\B_n\rightarrow S_{n+1}$ with $n\geq 6$ are cyclic, and that all transitive homomorphisms $\B_n\rightarrow S_{n+2}$ with $n\geq 5$ are cyclic. Lin also completely characterized all homomorphisms $B_n\rightarrow S_{2n}$  with $n\geq 7$, and gave explicit formulas for the non-cyclic homomorphisms that arise. 

Lin also proved several results about homomorphisms from $\B_n'$ to symmetric groups. For example, he proved that any homomorphism $\B_n'\rightarrow S_n$ with $n\geq 5$ is the restriction of a homomorphism $\B_n\rightarrow S_n$, that any homomorphism $\B_n'\rightarrow S_k$ with $n\geq 5$ and $k < n$ is trivial, and that any transitive homomorphism $B_n'\rightarrow S_k$ with $k < 2n$ is primitive.

\subsection*{Overview}
In Section~\ref{sec:tss}, we first review the basic properties of totally symmetric subsets and give examples of totally symmetric subsets of braid groups. We then proceed to Proposition~\ref{prop:size} and its proof, which form the technical core of the paper. In Section~\ref{sec:proofs} we prove Theorem~\ref{thm:main1} and Theorem~\ref{thm:main2}.

\subsection*{Acknowledgments} The majority of this work was completed in the summer of 2019 at the Georgia Institute of Technology mathematics REU as a part of the research cluster on braids, which was led by Dan Margalit. The authors would like to thank him for his guidance, for many helpful conversations, and for his comments on early drafts of this paper. The authors also thank Dawid Kielak for pointing out an error in an earlier version of the paper. We are grateful to the referee for numerous helpful comments.  This material is based upon work supported by the National Science Foundation under Grant Nos. DMS - 1057874 and DMS - 1745583.

\section{Totally symmetric sets}\label{sec:tss}

 To prove Theorems~\ref{thm:main1} and~\ref{thm:main2} we will use the theory of totally symmetric sets, which were introduced by Margalit and the second author \cite{km}. A totally symmetric subset of a group $G$ is a finite subset $\{g_1,\ldots , g_n\}$ of $G$ such that
\begin{enumerate}
\item The elements $g_i$ pairwise commute, and
\item For any permutation $\sigma\in S_n$, there exists $h\in G$ such that 
\[
hg_ih^{-1} = g_{\sigma(i)}\quad
\text{for all}\quad 1\leq i\leq n.
\]
\end{enumerate}

The theory of totally symmetric sets is particularly powerful as a tool for analyzing group homomorphisms. This stems from the following fact: If $f: G\rightarrow H$ is a homomorphism and $S$ is a totally symmetric subset of $G$, then $f(S)$ is a totally symmetric subset of $H$.  

\p{Some examples of totally symmetric sets} Totally symmetric sets occur naturally in the study of braid groups and, more generally, mapping class groups of surfaces. We now describe two totally symmetric subsets $X\subset \B_n$ and $X'\subset \B_n'$ that will play critical roles in the proofs of Theorems~\ref{thm:main1} and~\ref{thm:main2}.

Recall that $\B_n$ is generated by a standard set of half-twists $\sigma_1,\ldots, \sigma_{n-1}$. The usual commutation relations along with the change-of-coordinates principle from mapping class group theory imply that the following subset of $\B_n$ is totally symmetric of size $\floor{n/2}$:
 \[ 
 X = \{\sigma_{2i-1}\}_{i=1}^{\floor{n/2}}.
\]
 
The elements of $\B_n'$ are exactly those elements of $\B_n$ with vanishing signed word length. We claim that the following subset of $\B_n'$ is totally symmetric of size $\floor{n/2}-1$:
\[
X' = \{\sigma_1\sigma_{2i-1}^{-1}\}_{i=2}^{\floor{n/2}}.
\]
It is clear that the elements of this set commute pairwise and that they can be permuted in an arbitrary fashion via conjugation by elements of $\B_n$. However, in order for $X'$ to be a totally symmetric subset of $\B_n'$ we require the conjugating elements to lie in $\B_n'$. This can be arranged as follows. Suppose that $g$ permutes the elements of $X'$ according to a permutation $\sigma\in S_{\floor{n/2}-1}$, and let $\ell$ denote the signed word length of $g$. Then $g\sigma_1^{-\ell}$ has vanishing signed word length and permutes the elements of $X'$ according to $\sigma$, since for each $1\leq i\leq \floor{n/2}$ we have
\[
(g\sigma_1^{-\ell})(\sigma_1\sigma_{2i-1}^{-1})(g\sigma_1^{-\ell})^{-1} = g(\sigma_1\sigma_{2i-1}^{-1})g^{-1}.
\]

\p{The image of a totally symmetric set} The following lemma establishes one of the most useful properties possessed by totally symmetric sets. It is a main ingredient in our proof of Theorems~\ref{thm:main1} and~\ref{thm:main2}. The proof is due to Margalit and the second author~\cite[Lemma 2.1]{km}, but we also give it here for the reader's convenience. 
\begin{lemma}\label{prop:TSSHoms}
 Let $G,H$ be groups and let $f:G\rightarrow H$ be a homomorphism. If $S\subset G$ is a totally symmetric set, then $|f(S)|$ is equal to either $1$ or $|S|$. 
 \end{lemma}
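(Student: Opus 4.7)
The plan is a short combinatorial argument: show that if any two images in $f(S)$ collide, then all images collapse to a single element.

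Write $S=\{g_1,\ldots,g_n\}$. Suppose $f(g_i)=f(g_j)$ for some $i\neq j$, and let $k$ be any index. I want to show $f(g_k)=f(g_i)$. If $k\in\{i,j\}$ this is trivial, so assume $n\geq 3$ and $k\notin\{i,j\}$. Using total symmetry of $S$, choose $h\in G$ realizing the transposition $(j\ k)\in S_n$, so that
\[
h g_j h^{-1}=g_k,\qquad h g_i h^{-1}=g_i.
\]
Apply $f$ to both sides of each equation. From the second, $f(h)f(g_i)f(h)^{-1}=f(g_i)$. From the first, $f(h)f(g_j)f(h)^{-1}=f(g_k)$. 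Substituting $f(g_j)=f(g_i)$ into the left side of the latter and comparing with the former gives $f(g_k)=f(g_i)$, as desired.

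Hence either all pairs $(i,j)$ satisfy $f(g_i)\neq f(g_j)$, in which case $|f(S)|=|S|$, or at least one pair collides and the above shows every $f(g_k)$ equals the common value, so $|f(S)|=1$.

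I do not expect any real obstacle here; the only subtle point is simply making sure that the conjugator provided by the total-symmetry axiom can be chosen to fix the index $i$ while swapping $j$ and $k$, which is just the observation that the transposition $(j\ k)$ lies in $S_n$ and fixes $i$ whenever $i\notin\{j,k\}$. The proof uses the definition of a totally symmetric set and nothing else.
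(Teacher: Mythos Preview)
Your proof is correct and essentially identical to the paper's: both pick a collision $f(g_i)=f(g_j)$, choose a conjugator fixing $g_i$ and carrying $g_j$ to an arbitrary $g_k$, and conclude $f(g_k)=f(g_i)$. The only cosmetic difference is that the paper relabels so the collision occurs at indices $1,2$ and phrases the computation via $f(g_1g_2^{-1})=1$ rather than elementwise.
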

\begin{proof}

Let $S = \{g_1,\ldots, g_n\}$ and assume that $|f(S)| < |S|$.  After relabeling the elements of $S$ we may assume that $f(g_1) = f(g_2)$, and hence that $f(g_1g_2^{-1}) = 1$. Since $S$ is totally symmetric,  for each $i>2$ there exists $h\in G$ with $hg_1h^{-1} = g_1$ and $hg_2h^{-1} = g_i$. We have
\begin{align*}
f(g_1g_i^{-1}) = f(h(g_1g_2^{-1})h^{-1}) &= f(h)\left(f(g_1g_2^{-1})\right)f(h)^{-1} \\
& = f(h)\left(1\right)f(h)^{-1}\\
&= 1.
\end{align*}
That is, $f(g_i) = f(g_1)$. This shows that $|f(S)| = 1$, as desired. 
\end{proof}

\p{The lower bound} The remainder of this section is dedicated to the proof of Proposition~\ref{prop:size}, which gives a lower bound on the size of a group in terms of the size of a totally symmetric subset whose members have finite order. 
 \begin{proposition}\label{prop:size}
Let $n\geq 1$, and suppose that $S$ is a totally symmetric subset of a group $G$ with $|S| = n$. If the elements of $S$ have finite order, then $|G|\geq 2^{n-1}n!$.
\end{proposition}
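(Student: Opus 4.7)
The plan is to write $S = \{g_1, \ldots, g_n\}$ and derive the factors $n!$ and $2^{n-1}$ separately. Since the $g_i$ pairwise commute and have finite order, they generate a finite abelian subgroup $A = \langle g_1, \ldots, g_n \rangle$ of $G$; I aim to show $[G : A] \geq n!$ and $|A| \geq 2^{n-1}$, which multiply to give $|G| \geq 2^{n-1} n!$.

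For the coset bound, total symmetry provides, for each $\sigma \in S_n$, a conjugator $h_\sigma \in G$ with $h_\sigma g_i h_\sigma^{-1} = g_{\sigma(i)}$, and I claim that the left cosets $h_\sigma A$ are pairwise distinct. Indeed, if $h_\sigma A = h_\tau A$ then $h_\tau^{-1} h_\sigma \in A$ commutes with every $g_i$ (since $A$ is abelian), so $h_\sigma g_i h_\sigma^{-1} = h_\tau g_i h_\tau^{-1}$, forcing $g_{\sigma(i)} = g_{\tau(i)}$ for all $i$ and hence $\sigma = \tau$ as the $g_i$ are distinct.

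To bound $|A|$ from below, I would exhibit $2^{n-1}$ distinct elements of $A$ of the form $g_I := \prod_{i \in I} g_i$, indexed by subsets $I \subseteq \{1, \ldots, n-1\}$ (the product is well-defined because the $g_i$ commute). Suppose for contradiction that $g_I = g_J$ for some $I \neq J$ in $\{1, \ldots, n-1\}$. Cancelling common factors in the abelian group $A$, we may assume $I$ and $J$ are disjoint, and since $I \neq J$ their union is nonempty; pick $i \in I \cup J$, say $i \in I$. Because $I \cup J \subseteq \{1, \ldots, n-1\}$, the index $n$ lies outside $I \cup J$, so the transposition $\sigma = (i\,n) \in S_n$ satisfies $\sigma(I) = (I \setminus \{i\}) \cup \{n\}$ and $\sigma(J) = J$. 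Conjugating the relation $g_I = g_J$ by $h_\sigma$ yields $g_{\sigma(I)} = g_{\sigma(J)}$, and comparing this with the original relation gives $g_n = g_i$, contradicting distinctness. The main conceptual step is identifying the right family of $2^{n-1}$ candidate elements; once found, the distinctness check is a short combinatorial calculation that leverages the ``spare'' index $n$ to manufacture a contradiction.
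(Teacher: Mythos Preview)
Your proof is correct. The overall architecture---splitting the bound into an $n!$ factor coming from the cosets of $A=\langle S\rangle$ and a $2^{n-1}$ factor coming from $|A|$---is the same as the paper's, and your coset argument is essentially a repackaging of the paper's observation that the setwise stabilizer $\Gamma$ of $S$ surjects onto $S_n$ with kernel containing $\langle S\rangle$.

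Where you genuinely diverge is in the proof that $|A|\geq 2^{n-1}$. The paper invokes a separate lemma (due to Chen--Kordek--Margalit) which identifies the smallest $p\geq 2$ with $g_i^p=g_j^p$ for all $i,j$, and then carefully analyzes the relations among the $g_i$ to build a surjection $\langle S\rangle\to (\Z/p)^n/\langle e_1+\cdots+e_n\rangle$, a group of order $p^{n-1}\geq 2^{n-1}$. Your argument is more direct and entirely elementary: you simply exhibit $2^{n-1}$ distinct elements $g_I$ indexed by subsets $I\subseteq\{1,\ldots,n-1\}$, and the key trick---reserving the index $n$ as a ``spare'' so that any putative collision can be conjugated into the contradiction $g_i=g_n$---is clean and short. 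The paper's lemma yields the nominally stronger bound $|\langle S\rangle|\geq p^{n-1}$, but for the proposition only $p\geq 2$ is used, so nothing is lost by your route.
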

We remark that Proposition~\ref{prop:size} is sharp in the sense that, for each $n\geq 1$, there exists a group $G$ of cardinality $2^{n-1}n!$ that contains a totally symmetric set of size $n$ all of whose elements have finite order. For $n=1$ this is trivial. For $n\geq 2$, we may take $G = S_n\ltimes V$, where 
\[
V = (\Z/2)^n/\langle e_1+e_2+\cdots +e_n\rangle
\]
is the standard representation of the symmetric group over $\Z/2$. It follows that $\{e_1,e_2,\ldots,e_n\}\subset G$ is a totally symmetric set of size $n$, and since $V$ is a $\Z/2$-vector space of dimension $n-1$ we have that $|G|=2^{n-1}n!$.

\medskip

To prove Proposition~\ref{prop:size}, we require the following lemma. It is due to Chen, Margalit, and the second author~\cite{chenkordekmargalit2}. Since their paper has not yet appeared, we give the proof here. 
\begin{lemma}
\label{lem:torsion}
Let $n\geq 1$, let $G$ be a group, and let $S \subseteq G$ be a totally symmetric subset with $|S| = n$.  Suppose that each element of $S$ has finite order.  Then $\langle S \rangle$ is a finite group whose order is greater than or equal to $2^{n-1}$.
\end{lemma}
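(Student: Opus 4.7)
The plan is to induct on $n$, with the base case $n=1$ immediate. For the inductive step let $A = \langle S \rangle$; this is a finite abelian group, since its generators pairwise commute and have finite order. The elements of $S$ share a common order $d$ (they are $G$-conjugate), equal to the exponent of $A$, and $|A| > 1$ since $|S| \geq 2$. I fix a prime $p$ dividing $|A|$; since $|A|$ divides $d^n$, necessarily $p \mid d$. The strategy is to analyze the images $\bar g_i \in A/pA$.

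I first check that $g_i \notin pA$, so $\bar g_i \neq 0$. Indeed, if $g_i = px$ with $x \in A$ of order $e$, then $e$ divides the exponent $d$ while $e/\gcd(p,e) = d$; a short case analysis on whether $p \mid e$ leads to a contradiction with $p \mid d$. The totally symmetric structure induces an $S_n$-action on $A/pA$ which is transitive on $\{\bar g_i\}$, so either the $\bar g_i$ are pairwise distinct or they all coincide.

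In the distinct case the $S_n$-equivariant surjection $\mathbb{F}_p^n \to A/pA$ sending $e_i \mapsto \bar g_i$ has kernel $K$ an $S_n$-invariant subspace containing no $e_i - e_j$. The key observation is that for any $v \in K$ with two unequal coordinates $v_i \neq v_j$, the transposition $\tau_{ij}$ gives
\[
v - \tau_{ij} v = (v_i - v_j)(e_i - e_j) \in K,
\]
forcing $e_i - e_j \in K$ (since $v_i - v_j$ is a unit in $\mathbb{F}_p$) and contradicting the hypothesis. Hence $K \subseteq \mathbb{F}_p \cdot \mathbf{1}$, so $|K| \leq p$ and $|A| \geq |A/pA| \geq p^{n-1} \geq 2^{n-1}$.

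In the coinciding case each $g_1 g_i^{-1}$ lies in $pA$ for $2 \leq i \leq n$, and $T = \{g_1 g_i^{-1}\}_{i=2}^n$ is a totally symmetric subset of $G$ of size $n-1$ (via the $S_{n-1} \leq S_n$ stabilizing the index $1$). By the induction hypothesis $|\langle T \rangle| \geq 2^{n-2}$, and $\langle T \rangle \subseteq pA$ gives $|pA| \geq 2^{n-2}$. The common nonzero image $\bar g_1$ generates $A/pA$ as an $\mathbb{F}_p$-space, so $|A/pA| = p$, and $|A| = |pA| \cdot |A/pA| \geq 2^{n-2} \cdot p \geq 2^{n-1}$. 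I expect the main subtlety to be the orbit computation pinning down $K \subseteq \mathbb{F}_p \cdot \mathbf{1}$; once this is in hand the induction goes through cleanly in either case.
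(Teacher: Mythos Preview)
Your proof is correct and takes a genuinely different route from the paper's. The paper does not induct: it defines $p$ to be the least positive integer with $g_i^p = g_j^p$ for all $i,j$ (not necessarily prime), sets $\bar A = \Z^n/\langle e_1+\cdots+e_n,\; p e_1,\ldots,p e_n\rangle$ of order $p^{n-1}$, and shows directly that $g_i \mapsto \bar e_i$ extends to a surjection $\langle S\rangle \to \bar A$ by checking that every relation among the $g_i$ maps to a relation in $\bar A$. This is a hands-on relation chase exploiting total symmetry. Your argument instead passes to $A/pA$ for a genuine prime $p$, exploits the induced $S_n$-action (which does factor through $S_n$, since the kernel of $\Gamma \to S_n$ centralizes the generators and hence acts trivially on $A$), and reduces the ``distinct images'' case to the representation-theoretic fact that any $S_n$-invariant subspace of $\mathbb F_p^n$ avoiding all $e_i-e_j$ lies in $\mathbb F_p\cdot\mathbf 1$; the ``coinciding images'' case is then handled cleanly by induction on $n$ applied to $T=\{g_1 g_i^{-1}\}$. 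Your approach is more structural and modular, and the submodule argument would adapt if one wanted finer information about $\langle S\rangle$; the paper's approach is self-contained and yields an explicit quotient without needing a case split or induction.
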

\begin{proof}
If $n = 1$, then the lemma is trivially true, so we may assume that $n\geq 2$. Let $S = \{g_1,g_2,\ldots, g_n\}$. Since $S$ is totally symmetric, the elements $g_i$ all have the same order, which we denote by $m$. It follows that the subgroup $\langle S \rangle$ generated by $S$ is a quotient of $(\Z/m)^n$, which is finite. Thus $\langle S\rangle$ is finite. Our aim now is to show that $|\langle S\rangle| \geq 2^{n-1}$. 

Let $p$ be the smallest integer in the interval $[1,m]$ such that $g_i^p = g_j^p$ for each $i,j$ (note that such a $p$ exists because $g_i^m = g_j^m = 1$ for all $i,j$). Observe that in fact $p>1$, since $p = 1$ woud imply that the elements $g_i$ were not all distinct. 

Now let $A$ denote the free abelian group of rank $n$ with generators $e_1,\ldots, e_n$. Let $\bar A$ denote the quotient of $A$ by the subgroup of relations generated by the elements
\[
e_1 + \cdots + e_n \quad  \text{and}\quad  pe_i\ \ \text{with}\ 1 \leq i \leq n. 
\]
It follows that $|\bar A| = p^{n-1} \geq 2^{n-1}$. To prove the lemma, it suffices to demonstrate the existence of a surjective homomorphism $ \langle S\rangle \rightarrow \bar A$. We will do this by showing that the function $\phi: S \rightarrow \bar A$ defined by $\phi(g_i) = \bar e_i$ extends to a well-defined surjective homomorphism $\langle S\rangle \rightarrow \bar A$ (surjectivity follows from the fact that $\phi$ maps elements of $S$ to generators of $\bar A$). 

To this end, we consider the map $\phi': \Z S\rightarrow  A$ induced by $\phi$, where $\Z S$ denotes the free abelian group generated by the elements of $S$.  We will show that any relation satisfied by the elements of $S$ (that is, any element of the kernel of the quotient $\Z S\rightarrow \langle S\rangle$) maps to a defining relation for $\bar A$. We will then conclude that $\phi'$ descends to a well-defined surjective homomorphism $\langle S\rangle\rightarrow \bar A$, as desired. We first carry out these steps for $n\geq 3$ and then proceed to the case of $n=2$.

Let $R = g_1^{q_1}g_2^{q_2}\cdots g_n^{q_n}\in \Z S$ be a relation in $\langle S \rangle$ (here we write the elements of $\Z S$ in multiplicative notation). Since $g_1^p = g_i^p$ in $\langle S\rangle$ for each $i$, we may assume that $0\leq q_i\leq p-1$ for $2\leq i \leq n$. 

Assume that $n\geq 2$. First, we claim that the $q_i$ are independent of $i$ for $2\leq i \leq n$. The claim is trivially true if $n=2$, so from now on we assume that $n\geq 3$. By the total symmetry of $S$, there exists $h\in G$ that conjugates $g_2$ to $g_3$, that conjugates $g_3$ to $g_2$, and that commutes with all other $g_i$. This implies that the element $R' = g_1^{q_1}g_2^{q_3}g_3^{q_2}g_4^{q_4}\cdots g_n^{q_n}\in \Z S$ is also relation in $\langle S\rangle$. It follows now that 
\[
R(R')^{-1} = g_2^{q_2-q_3}g_3^{q_3-q_2}
\]
is a relation in $\langle S\rangle$ and hence that 
\[
g_2^{q_2-q_3} = g_3^{q_2-q_3}
\] 
in $\langle S\rangle$. After possibly relabeling $g_2$ and $g_3$ we may assume that $q_2 \geq q_3$, and since $0\leq q_2,q_3<p$, we also have $0\leq q_2-q_3 <p$. By total symmetry, we have that $g_i^{q_2-q_3} = g_j^{q_2-q_3}$ in $\langle S\rangle$ for all $i,j$. By the minimality of $p$, we have that $q_2-q_3 = 0$. Total symmetry further implies that $q_3 = q_4 = \cdots = q_n$. This proves the claim.

Next, we claim that there exists $\ell\in \Z$ such that $q_1 = q_2 + \ell p$. By the preceding claim, we can now write
\[
R = g_1^{q_1}(g_2g_3\cdots g_n)^{q_2}.
\]
As in the proof of the preceding claim, we may transpose $g_1$ and $g_2$ to obtain a further relation
\[
g_2^{q_1}(g_1g_3\cdots g_n)^{q_2}.
\]
Combining these two relations shows that  $g_1^{q_1-q_2}g_2^{-(q_1-q_2)}$ is also a relation, and hence that 
\[
g_1^{q_1-q_2} = g_2^{q_1-q_2}
\]
in $\langle S\rangle$. Since $g_i^p = g_j^p$ in $\langle S\rangle$ for all $i,j$, we further have that
\[
g_1^{q_1-q_2-\ell p} = g_2^{q_1-q_2-\ell p} \qquad\ \text{for all} \ \ell\geq 0. 
\]
The total symmetry of $S$ then implies that $g_i^{q_1-q_2-\ell p} = g_j^{q_1-q_2-\ell p}$ in $\langle S\rangle$ for all $i,j$. By the division algorithm, we may choose $\ell$ so that $0\leq q_1-q_2-\ell p\leq p-1$. Again by the minimality of $p$, we must have that $q_1-q_2-\ell p$. That is, $q_1 = q_2 + \ell p$. The claim is proven.

Combining the preceding two claims, we may now write
\[
R = g_1^{q_1}\cdots g_n^{q_n} =  g_1^{q_2+\ell p}(g_2\cdots g_n)^{q_2} = g_1^{\ell p}(g_1\cdots g_n)^{q_2}.
\]
It follows now that $R$ maps under $\phi'$ to the relation
\[
\ell(pe_1) + q_2(e_1+\dots + e_n)
\]
in $\bar A$. Since $R$ was chosen arbitrarily, this shows that $\phi'$ descends to a homomorphism $\langle S\rangle \rightarrow \bar A$. This completes the proof of the lemma.  
\end{proof}

We can now give the proof of Proposition~\ref{prop:size}.
\begin{proof}[Proof of Proposition~\ref{prop:size}]
We begin with the claim that if $G$ is a group and $S\subset G$ is totally symmetric of size $n$, then there is subgroup $\Gamma < G$ and a surjective homomorphism $\Gamma\rightarrow S_n$.  To prove the claim, we first observe that $G$ acts by conjugation on the set of totally symmetric subsets of $G$ of cardinality $n$. Explicitly, if $S = \{x_1,\ldots , x_n\}\subset G$ is totally symmetric then 
\[
g\cdot S = \{gx_1g^{-1},\ldots , gx_ng^{-1}\}.
\]
Let $\Gamma$ denote the stabilizer of $S$ in $G$. There is a homomorphism $\phi: \Gamma\rightarrow \text{Sym}(S)\cong S_n$ defined by sending $\gamma\in \Gamma$ to the automorphism defined by $s \rightarrow \gamma s \gamma^{-1}$.  Since $S$ is totally symmetric, every permutation of $S$ can be realized as the image of some element $\gamma\in \Gamma$. Thus $\phi$ is surjective and the claim follows. 

The kernel of $\phi$ consists of those elements of $\Gamma$ that commute with each element of  $S$.  Since the elements of $S$ commute pairwise, any element of $\langle S\rangle$ commutes with each element of $S$. This shows that $\langle S\rangle \subset \ker \phi$.  By Proposition~\ref{prop:size} we have that $|\langle S \rangle| \geq 2^{n-1}$, and so we further have that $|\ker \phi| \geq 2^{n-1}$. The proposition then follows from the fact that $|\ker\phi||S_n| = |\Gamma| \leq |G|$. 

\end{proof}

\section{The proofs of Theorems~\ref{thm:main1} and~\ref{thm:main2}}\label{sec:proofs}

We now have everything we need to prove Theorem~\ref{thm:main1} and Theorem~\ref{thm:main2}. 
\begin{proof}[Proof of Theorem~\ref{thm:main1}]
Let $f\colon \B_n\rightarrow G$ be any homomorphism that is not cyclic. We claim first that $|f(X)| = \floor{n/2}$. The theorem will follow from this claim, since Proposition~\ref{prop:size} will then imply that $|G|\geq 2^{\floor{n/2}-1}(\floor{n/2})!$. 

Since $|X| = \floor{n/2}$, we have that $|f(X)| \leq \floor{n/2}$.  For the sake of deriving a contradiction, assume that $|f(X)|\leq \floor{n/2}-1$. Lemma~\ref{prop:TSSHoms} then implies that $|f(X)| = 1$, and so we have that 
\[
f(\sigma_1) = f(\sigma_3).
\]
In other words, $\sigma_1\sigma_3^{-1}\in \ker f$. Since the normal closure of $\sigma_1\sigma_3^{-1}$ in $\B_n$ is equal to $\B_n'$ (see \cite{lin}) and $\ker f$ is a normal subgroup of $\B_n$, we have that $\B_n'\subset \ker f$. In view of the fact that $\B_n/\B_n' \cong \Z$, we have that $f$ factors through $\Z$. That is, $f$ is cyclic. This contradiction shows that $|f(X)| = \floor{n/2}$, as claimed. This completes the proof.
\end{proof}

We now give the proof of Theorem~\ref{thm:main2}.

\begin{proof}[Proof of Theorem~\ref{thm:main2}]
 We first dispense with the case $n=5$. As we pointed out in the introduction, any quotient of $\B_5'$ is perfect and the smallest non-trivial perfect group is the alternating group $A_5$. It follows that if $f: \B_5'\rightarrow G$ is non-trivial then $|G| \geq |A_5| = 60$. Since $60 > 2^{2-2}(2-1)! = 2^{\floor{n/2}-2}(\floor{n/2}-1)!$ the proof is complete in the case $n=5$. 

Assume now that $n\geq 6$. Let $f: \B_n'\rightarrow G$ be a non-trivial homomorphism. Recall from Section~\ref{sec:tss} that there is a totally symmetric subset $X'$ of $\B_n'$ of size $\floor{n/2}-1$ defined by 
\[
X' = \{\sigma_1\sigma_{2i-1}^{-1}\}_{i=2}^{\floor{n/2}}.
\]
Lemma~\ref{prop:TSSHoms} implies that either $|f(X')| = |X'| = \floor{n/2}-1$ or that $|f(X')| = 1$. Assume that $|f(X')| = \floor{n/2}-1$. It follows directly from Proposition~\ref{prop:size} that $|G| \geq 2^{\floor{n/2}-2}(\floor{n/2}-1)!$. To complete the proof, it therefore suffices to show that if $|f(X')| = 1$ then $f$ is trivial. 

We now assume that $|f(X')| = 1$. The assumption implies that  $f(\sigma_1\sigma_3^{-1}) = f(\sigma_1\sigma_5^{-1})$ and therefore that $f(\sigma_3\sigma_5^{-1}) = 1$. The element $\sigma_3\sigma_5^{-1}$ is conjugate in $\B_n$ to $\sigma_1\sigma_3^{-1}$, and, since the normal closure of $\sigma_1\sigma_3^{-1}$ in $\B_n$ is equal to $\B_n'$, the normal closure of $\sigma_3\sigma_5^{-1}$ in $\B_n$ is also equal to $\B_n'$. It follows that the normal closure of $\sigma_3\sigma_5^{-1}$ in $\B_n'$ is also equal to $\B_n'$ (see~\cite[Lemma 8.3]{chenkordekmargalit1}). It further follows that $\B_n' \subset \ker f$, and hence that $\B_n' = \ker f$. That is, $f$ is trivial. This completes the proof.
\end{proof}


\begin{thebibliography}{1}

\bibitem{artin}
E.~Artin.
\newblock Braids and permutations.
\newblock {\em Ann. of Math. (2)}, 48:643--649, 1947.

\bibitem{berrickgebhardtparis}
A.~J. Berrick, V.~Gebhardt, and L.~Paris.
\newblock Finite index subgroups of mapping class groups.
\newblock {\em Proc. Lond. Math. Soc. (3)}, 108(3):575--599, 2014.

\bibitem{chenkordekmargalit1}
Lei {Chen}, Kevin {Kordek}, and Dan {Margalit}.
\newblock {Homomorphisms between braid groups}.
\newblock {\em arXiv e-prints}, arXiv:1910.00712, Oct 2019.

\bibitem{chenkordekmargalit2}
Lei {Chen}, Kevin {Kordek}, and Dan {Margalit}.
\newblock {Homomorphisms from braid groups to mapping class groups}.
\newblock in preparation

\bibitem{kielakpierro}
Dawid {Kielak} and Emilio {Pierro}.
\newblock {On the smallest non-trivial quotients of mapping class groups}.
\newblock {\em  Groups Geom. Dyn.},  to appear

\bibitem{kordeksmallgen}
Kevin {Kordek}.
\newblock {The commutator subgroup of the braid group is generated by two
  elements}.
\newblock {\em arXiv e-prints}, page arXiv:1910.06944, Oct 2019.

\bibitem{km}
Kevin {Kordek} and Dan {Margalit}.
\newblock {Homomorphisms of commutator subgroups of braid groups}.
\newblock {\em arXiv e-prints}, page arXiv:1910.06941, Oct 2019.

\bibitem{lin}
Vladimir {Lin}.
\newblock {Braids and Permutations}.
\newblock {\em arXiv Mathematics e-prints}, page math/0404528, Apr 2004.

\bibitem{zimmermann}
Bruno~P. Zimmermann.
\newblock On minimal finite quotients of mapping class groups.
\newblock {\em Rocky Mountain J. Math.}, 42(4):1411--1420, 2012.


\end{thebibliography}

\end{document}